\numberwithin{equation}{section}
\numberwithin{figure}{section}
\theoremstyle{plain}
\newtheorem{thm}{\protect\theoremname}[section]
\theoremstyle{plain}
\newtheorem{lem}[thm]{\protect\lemmaname}
\theoremstyle{plain}
\newtheorem{cor}[thm]{\protect\corollaryname}
\theoremstyle{remark}
\newtheorem{rem}[thm]{\protect\remarkname}
\theoremstyle{remark}
\newtheorem*{claim*}{\protect\claimname}
\theoremstyle{definition}
\newtheorem{defn}[thm]{\protect\definitionname}
\theoremstyle{definition}
\newtheorem{example}[thm]{\protect\examplename}
\global\long\def\ns#1{\prescript{\ast}{}{#1}}
\providecommand{\claimname}{Claim}
\providecommand{\corollaryname}{Corollary}
\providecommand{\definitionname}{Definition}
\providecommand{\examplename}{Example}
\providecommand{\lemmaname}{Lemma}
\providecommand{\remarkname}{Remark}
\providecommand{\theoremname}{Theorem}
\begin{document}
\title{Fehrele's principle in nonstandard topology}
\author{Takuma Imamura}
\address{Research Institute for Mathematical Sciences\\
Kyoto University\\
Kitashirakawa Oiwake-cho, Sakyo-ku, Kyoto 606-8502, Japan}
\email{timamura@kurims.kyoto-u.ac.jp}
\begin{abstract}
In nonstandard analysis, Fehrele's principle is a beautiful criterion
for a set to be internal, stating that every galactic halic set is
internal. In this note, we use this principle to prove some well-known
results in topology, including slight generalisations of the Moore-Osgood
theorem and Dini's theorem.
\end{abstract}

\keywords{Fehrele's principle, Robinson's lemma, Moore-Osgood theorem, Dini's
theorem, Nonstandard analysis.}
\subjclass[2000]{54J05.}
\maketitle

\section{Introduction}

Throughout the note, let $\mathbb{U}$ be a standard universe and
$\ns{\mathbb{U}}$ a sufficiently saturated elementary extension of
$\mathbb{U}$. A subset $G$ of $\ns{\mathbb{U}}$ is said to be \emph{galactic}
if there is a family $\mathcal{G}=\set{G_{i}|i\in S}$ of internal
sets, where $S$ is standard, such that $G=\bigcup\mathcal{G}$. A
subset $H$ of $\ns{\mathbb{U}}$ is said to be \emph{halic} if there
is a family $\mathcal{H}=\set{H_{i}|i\in S}$ of internal sets, where
$S$ is standard, such that $H=\bigcap\mathcal{H}$. Note that $\mathcal{G}$
and $\mathcal{H}$ themselves are not necessarily internal.

Fehrele's principle now can be stated as follows: every galactic halic
set is internal. The aim of this note is to demonstrate how to use
this principle by proving some theorems in topology. In \prettyref{sec:Fehrele's-principle},
we recall the statements and the proofs of Fehrele's principle and
some related principles. In \prettyref{sec:Some-applications-in-topology},
we prove some well-known theorems in topology, including slight generalisations
of the Moore-Osgood theorem and Dini's theorem.

We refer to Robinson \cite{Rob66} and Stroyan and Luxemburg \cite{SL76}
for nonstandard analysis, and van den Berg \cite{vdB87} for the details
of Fehrele's principle.

\section{\label{sec:Fehrele's-principle}Fehrele's principle}

Let $\varDelta$ be a directed set. An element $\delta\in\ns{\varDelta}$
is \emph{limited} if $\delta$ is dominated by some element of $\varDelta$;
and $\delta$ is \emph{illimited} if $\delta$ dominates $\varDelta$.
Note that when $\varDelta$ is not linearly ordered, $\ns{\varDelta}$
may have elements which are neither limited nor illimited; and when
$\varDelta$ is self-bounded, $\ns{\varDelta}$ has elements which
are both limited and illimited.
\begin{lem}[Overspill]
\label{lem:Overspill}Let $\varDelta$ be a directed set and $A$
an internal subset of $\ns{\varDelta}$.
\begin{enumerate}
\item If $A$ contains all sufficiently large limited elements of $\ns{\varDelta}$,
then it also contains all sufficiently small illimited elements of
$\ns{\varDelta}$.
\item If $A$ contains arbitrarily large limited elements of $\ns{\varDelta}$,
then it also contains arbitrarily small illimited elements of $\ns{\varDelta}$.
\end{enumerate}
\end{lem}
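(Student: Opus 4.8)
The plan is to reduce both parts to a single saturation fact. For each $d\in\varDelta$ write $U_{d}=\set{\delta\in\ns\varDelta|\delta\ge d}$ and $D_{d}=\set{\delta\in\ns\varDelta|\delta\le d}$; these are internal, the limited elements form $L=\bigcup_{d\in\varDelta}D_{d}$, and the illimited elements form $I=\bigcap_{d\in\varDelta}U_{d}$. The engine I would isolate first is the following: \emph{if $C$ is internal and $C\cap U_{d}\ne\emptyset$ for every $d\in\varDelta$, then $C\cap I\ne\emptyset$.} To see this, note that the internal family $\set{C\cap U_{d}|d\in\varDelta}$ has the finite intersection property: given $d_{1},\dots,d_{n}$, pick an upper bound $d^{*}$ in the directed set $\varDelta$, so that $U_{d^{*}}\subseteq U_{d_{i}}$ for each $i$ and hence $\bigcap_{i}(C\cap U_{d_{i}})\supseteq C\cap U_{d^{*}}\ne\emptyset$. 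Since $\varDelta$ is standard and $\ns{\mathbb{U}}$ is sufficiently saturated, $\bigcap_{d\in\varDelta}(C\cap U_{d})=C\cap I$ is nonempty.

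For (2), fix an illimited $\gamma$ and set $C=A\cap D_{\gamma}$, which is internal. Given $d\in\varDelta$, the hypothesis supplies a limited $\delta\in A$ with $\delta\ge d$; as $\delta$ is limited it satisfies $\delta\le d_{1}$ for some $d_{1}\in\varDelta$, and since $\gamma$ is illimited, $\delta\le d_{1}\le\gamma$, so $\delta\in C\cap U_{d}$. Thus $C$ meets every $U_{d}$, and the engine produces an illimited element of $A$ below $\gamma$. As $\gamma$ was arbitrary, $A$ contains arbitrarily small illimited elements.

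For (1), say $A$ contains every limited $\delta\ge d_{0}$ (a standard threshold may be assumed, since a limited threshold is itself dominated by some element of $\varDelta$). The key move is to replace $A$ by the internal \emph{downward-good} set $B=\set{k\in\ns\varDelta|D_{k}\cap U_{d_{0}}\subseteq A}$, defined by an internal formula with parameters $A,d_{0}$. I would first check $L\subseteq B$: if $k$ is limited and $d_{0}\le\delta\le k$, then $\delta$ is limited and $\ge d_{0}$, hence $\delta\in A$. In particular $d\in B\cap U_{d}$ for each $d\in\varDelta$, so the engine yields an illimited $\gamma\in B$. By the definition of $B$, every $\delta\le\gamma$ with $\delta\ge d_{0}$ lies in $A$; since each illimited $\delta$ automatically satisfies $\delta\ge d_{0}$, all illimited $\delta\le\gamma$ lie in $A$, which is the desired conclusion.

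The main obstacle is conceptual rather than computational: because $\varDelta$ need not be linearly ordered, the classical proof of overspill by taking a least element of the complement is unavailable, and there are elements that are neither limited nor illimited to be navigated around. The saturation engine is what replaces well-ordering, and the care needed is (i) formulating ``sufficiently small illimited'' as the existence of a single illimited threshold $\gamma$, and (ii) choosing the auxiliary internal set $B$ in (1) so that extracting one illimited point of $B$ already certifies that the whole illimited part below $\gamma$ sits in $A$.
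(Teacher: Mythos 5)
Your proof is correct, and its part (2) coincides with the paper's: your engine applied to $C=A\cap D_{\gamma}$ saturates exactly the paper's family $B_{L}=\left[L,U\right]\cap A$ (with $U=\gamma$). Part (1) takes a genuinely different decomposition. The paper saturates the family $\set{A_{L}|L\in\varDelta,L\geq L_{0}}$, where $A_{L}=\set{U\in\ns{\varDelta}|L\leq U\wedge\left[L,U\right]\subseteq A}$ and $L_{0}$ is the limited (possibly nonstandard) threshold, extracting one $U$ that is good for every standard $L\geq L_{0}$ simultaneously; you instead first normalize the threshold to a standard $d_{0}$ (your parenthetical justification of this step is exactly right, and the step is needed) and then overspill the single internal set $B$ of good right endpoints using the same engine as in (2). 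Your route buys uniformity and a small technical cleanup: one saturation lemma serves both parts; the index set of every saturation call is the standard set $\varDelta$ itself, whereas the paper's family in (1) is indexed by $\set{L\in\varDelta|L\geq L_{0}}$, a set defined from the nonstandard parameter $L_{0}$, so saturation there must be justified by cardinality rather than by standardness of the index set; and your threshold $\gamma$ is explicitly illimited, a point the paper leaves implicit for its $U$. The observation enabling your shortcut --- an illimited $\delta$ automatically dominates the one fixed standard $d_{0}$, so a single left endpoint suffices and the whole family of sets $A_{L}$ is unnecessary --- is the real difference; what the paper's version buys in exchange is brevity, producing the threshold in one saturation call without the auxiliary set $B$ or the normalization. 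Both arguments ultimately rest on the same two facts: directedness gives the finite intersection property, and any interval $\left[d_{0},k\right]$ with $k$ limited consists of limited points above $d_{0}$ (you use this to show that $B$ contains all limited elements; the paper uses it implicitly when verifying that a standard upper bound $L^{*}$ of $L_{1},\dots,L_{n}$ lies in each $A_{L_{i}}$).
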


\begin{proof}
\begin{enumerate}
\item For $L\in\varDelta$ set $A_{L}:=\set{U\in\ns{\varDelta}|L\leq U\wedge\left[L,U\right]\subseteq A}$.
By assumption, the family $\set{A_{L}|L\in\varDelta,L\geq L_{0}}$
has the finite intersection property for some limited $L_{0}\in\ns{\varDelta}$.
Hence we can pick an element $U\in\bigcap_{L\in\varDelta,L\geq L_{0}}A_{L}$
by saturation. Every illimited element of $\ns{\varDelta}$ below
$U$ belongs to $A$.
\item Let $U\in\ns{\varDelta}$ be illimited. For $L\in\varDelta$ set $B_{L}:=\left[L,U\right]\cap A$.
By assumption, the family $\set{B_{L}|L\in\varDelta}$ has the finite
intersection property. Hence we can pick an element $\delta\in\bigcap_{L\in\varDelta}B_{L}$
by saturation. $\delta$ is an illimited element of $\ns{\varDelta}$
below $U$ and belongs to $A$.\qedhere
\end{enumerate}
\end{proof}
\begin{lem}[Prolongation]
\label{thm:Prolongation}Every map $f\colon S\to\ns{\mathbb{U}}$,
where $S$ is standard, can be extended to an internal map $\ns{S}\to\ns{\mathbb{U}}$.
\end{lem}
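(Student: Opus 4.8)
The plan is to deduce the lemma from saturation, so I would first recast it as a finite-intersection statement. Identify $S$ with the set of standard points of $\ns{S}$ under the canonical embedding; then the goal is a single internal function $F$ with domain $\ns{S}$ satisfying $F(s)=f(s)$ for every standard $s\in S$. Because $\ns{\mathbb{U}}$ is sufficiently saturated, say $\kappa$-saturated with $\kappa$ larger than the cardinality of every standard set, it is enough to attach to each $s\in S$ an internal set $\Phi_{s}$ of candidate functions forced to take the value $f(s)$ at $s$, and to verify that $\set{\Phi_{s}|s\in S}$ has the finite intersection property; any member of $\bigcap_{s\in S}\Phi_{s}$ is then the desired extension.

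The first step is to bound the range of $f$ by a single internal set. The external image $f[S]$ has standard cardinality, so the type $\set{f(s)\in x|s\in S}$ over $\ns{\mathbb{U}}$ uses fewer than $\kappa$ parameters and is finitely satisfiable, since any finite subfamily is realised by the finite set of the corresponding values. Saturation then yields an internal set $B$ with $f(s)\in B$ for all $s\in S$. Working inside $\ns{\mathbb{U}}$, the collection $\mathcal{F}$ of internal functions from $\ns{S}$ to $B$ is the internal set $B^{\ns{S}}$, and for each $s$ I would put $\Phi_{s}:=\set{h\in\mathcal{F}|h(s)=f(s)}$, an internal subset of $\mathcal{F}$ defined from the parameters $\mathcal{F}$, $s$ and $f(s)$.

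Checking the finite intersection property is then routine: given $s_{1},\dots,s_{n}\in S$, the prescription sending each $s_{i}$ to $f(s_{i})$ and every other point of $\ns{S}$ to $f(s_{1})$ defines, by comprehension inside $\ns{\mathbb{U}}$, an internal function $\ns{S}\to B$ lying in $\Phi_{s_{1}}\cap\dots\cap\Phi_{s_{n}}$. Hence the family is finitely satisfiable, and saturation provides a common element, which is an internal extension of $f$.

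The one step I expect to be delicate is the bounding claim, the passage from the external family $f[S]$ to a single internal set $B$ containing it. This is exactly where the codomain $\ns{\mathbb{U}}$, rather than a fixed standard $\ns{X}$, makes itself felt: without such a $B$ the space of candidate functions is a proper class from the viewpoint of $\ns{\mathbb{U}}$ and saturation cannot be applied. Once $B$ has been produced the rest is a textbook saturation argument, and in particular it does not rely on the Overspill lemma.
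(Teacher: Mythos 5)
Your proof is correct, and its skeleton is the same as the paper's: attach to each $s\in S$ a set of candidate functions pinned to the value $f(s)$ at $s$, check the finite intersection property, and close off with saturation. The genuine difference is your preliminary bounding step, and it is an improvement rather than a detour. The paper works with $A_{i}=\set{g\in\ns{\mathbb{U}}|g\left(i\right)=f\left(i\right)}$, which is not an internal set but a class-sized definable collection inside $\ns{\mathbb{U}}$; the ``family of internal sets with the finite intersection property'' form of saturation therefore does not literally apply to it, and the paper's argument has to be read model-theoretically, as realising the type $\set{g\left(i\right)=f\left(i\right)|i\in S}$ over fewer than $\kappa$ parameters --- and, strictly speaking, one must also adjoin to that type the condition that $g$ be a function with domain $\ns{S}$, which the paper's $A_{i}$ do not enforce. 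Your internal bound $B$ repairs both points at once: each $\Phi_{s}$ is a genuine internal subset of the internal set $B^{\ns{S}}$, so the FIP form of saturation applies verbatim, and any common element is automatically an internal function defined on all of $\ns{S}$. The price is the extra saturation argument producing $B$, which you carry out soundly (finite satisfiability holds because any finite set of internal points is internal), and your closing observation is also accurate: neither your proof nor the paper's uses Overspill here --- Prolongation sits below it, not above it, in the paper's architecture.
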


\begin{proof}
For each $i\in S$ let $A_{i}:=\set{g\in\ns{\mathbb{U}}|g\left(i\right)=f\left(i\right)}$.
It is easy to see that the family $\set{A_{i}|i\in S}$ has the finite
intersection property. By saturation, the intersection $\bigcap_{i\in S}A_{i}$
has an element $g$ which extends $f$ internally.
\end{proof}
\begin{thm}[Separation]
\label{thm:Separation}Let $G$ be a galactic set and $H$ a halic
set, and suppose that $G\subseteq H$. Then there exists an internal
set $I$ such that $G\subseteq I\subseteq H$.
\end{thm}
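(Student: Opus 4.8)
The plan is to obtain $I$ by a single application of saturation, viewing the desired set as a point to be located inside an intersection of internal families. First I would unpack the hypotheses: write $G=\bigcup_{i\in S}G_{i}$ and $H=\bigcap_{j\in T}H_{j}$ with all $G_{i},H_{j}$ internal and $S,T$ standard. The inclusion $G\subseteq H$ is then equivalent to the assertion that $G_{i}\subseteq H_{j}$ for every $i\in S$ and $j\in T$; in particular each $G_{i}$ lies in $H$, and fixing some $j_{0}\in T$ all the $G_{i}$ (and the eventual $I$) will live inside the single internal set $H_{j_{0}}$, which I take as the ambient internal set.

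Next I would reformulate the goal. Finding an internal $I$ with $G\subseteq I\subseteq H$ amounts to finding an internal $I\subseteq H_{j_{0}}$ satisfying $G_{i}\subseteq I$ for all $i\in S$ and $I\subseteq H_{j}$ for all $j\in T$. For each $i\in S$ set $\Phi_{i}:=\set{I|G_{i}\subseteq I}$ and for each $j\in T$ set $\Psi_{j}:=\set{I|I\subseteq H_{j}}$, where $I$ ranges over the internal subsets of $H_{j_{0}}$. Since $G_{i}$ and $H_{j}$ are internal and the subset relation is expressible by an internal formula, each $\Phi_{i}$ and each $\Psi_{j}$ is an internal subset of the (internal) power set of $H_{j_{0}}$. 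This internality is precisely what lets saturation bite, and I regard verifying it as the one point that genuinely deserves care: the reduction and the combinatorics are routine, but everything collapses unless the constraint families are honestly internal.

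It then remains to check the finite intersection property of the family $\set{\Phi_{i}|i\in S}\cup\set{\Psi_{j}|j\in T}$. Given finite $F\subseteq S$ and $E\subseteq T$, the finite union $I_{0}:=\bigcup_{i\in F}G_{i}$ is internal, visibly contains each $G_{i}$ with $i\in F$, and satisfies $I_{0}\subseteq H_{j}$ for every $j$ because each $G_{i}\subseteq H\subseteq H_{j}$; hence $I_{0}$ witnesses $\bigcap_{i\in F}\Phi_{i}\cap\bigcap_{j\in E}\Psi_{j}\neq\emptyset$. The index set $S\sqcup T$ has standard, hence bounded, cardinality, so by the assumed saturation of $\ns{\mathbb{U}}$ the whole intersection is nonempty. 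Any $I$ in it is internal and satisfies $G_{i}\subseteq I\subseteq H_{j}$ for all $i,j$, that is, $G\subseteq I\subseteq H$, as required. (The degenerate cases, where $S$ or $T$ is empty, are handled directly, taking $I=\emptyset$ or $I=H_{j_{0}}$.)
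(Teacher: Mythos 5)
Your proof is correct, but it takes a genuinely different route from the paper's. The paper first arranges (``without loss of generality'') for $\mathcal{G}$ and $\mathcal{H}$ to be indexed by the same standard set $S$, extends both families to internal families over $\ns{S}$ by Prolongation, and then applies Overspill to the directed set $\mathcal{P}_{\mathrm{fin}}\left(S\right)$ to produce a hyperfinite $T\supseteq S$ with $\bigcup_{i\in T}G_{i}\subseteq\bigcap_{i\in T}H_{i}$; the internal witness is then the hyperfinite union $I:=\bigcup_{i\in T}G_{i}$. You instead make a single, direct application of saturation to the constraint sets $\Phi_{i},\Psi_{j}$ inside the internal power set of an ambient $H_{j_{0}}$, with finite unions $\bigcup_{i\in F}G_{i}$ witnessing the finite intersection property. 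Your route is shorter and avoids both Prolongation and the index-merging step, at the cost of invoking the internal power set --- that is, the fact that the internal subsets of an internal set form an internal set, so that $\Phi_{i}$ and $\Psi_{j}$ are internal by the internal definition principle; this holds in the superstructure framework the paper implicitly works in, and you are right to flag it as the one point deserving care (it is also why fixing the ambient $H_{j_{0}}$ is not merely cosmetic: without it, $\Phi_{i}$ would be a class of internal sets rather than an internal set). What the paper's longer detour buys is a showcase of exactly the Overspill-on-directed-sets machinery that the rest of the note runs on, together with an explicit form for $I$ as a hyperfinite union of the $G_{i}$; note, though, that Prolongation and Overspill are themselves saturation arguments, so both proofs ultimately draw on the same resource, yours just once and more nakedly. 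One trivial blemish: in your parenthetical on degenerate cases, if $T=\varnothing$ there is no $H_{j_{0}}$ to take as $I$; but since $\bigcap\varnothing$ is not a subset of $\ns{\mathbb{U}}$, the paper's definition of a halic set implicitly forces a nonempty index family, so this case does not actually arise.
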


\begin{proof}
Take $\mathcal{G}$ and $\mathcal{H}$ such that $G=\bigcup\mathcal{G}$
and $H=\bigcap\mathcal{H}$. We may assume without loss of generality
that both $\mathcal{G}$ and $\mathcal{H}$ are indexed by the same
standard set $S$: $\mathcal{G}=\set{G_{i}|i\in S}$ and $\mathcal{H}=\set{H_{i}|i\in S}$.
Extend $\mathcal{G}$ and $\mathcal{H}$ to $\ns{S}$ internally by
\nameref{thm:Prolongation}.

The set $\mathcal{P}_{\mathrm{fin}}\left(S\right)$ of all finite
subsets of $S$ forms a directed set with respect to $\subseteq$.
Note that the illimited elements of $\ns{\left(\mathcal{P}_{\mathrm{fin}}\left(S\right)\right)}$
are precisely hyperfinite subsets of $\ns{S}$ containing $S$. Now
consider the internal set $\mathcal{A}:=\set{T\in\ns{\left(\mathcal{P}_{\mathrm{fin}}\left(S\right)\right)}|\bigcup_{i\in T}G_{i}\subseteq\bigcap_{i\in T}H_{i}}$.
Clearly $\mathcal{P}_{\mathrm{fin}}\left(S\right)\subseteq\mathcal{A}$.
By \nameref{lem:Overspill}, $\mathcal{A}$ includes an illimited
element $T$ of $\ns{\left(\mathcal{P}_{\mathrm{fin}}\left(S\right)\right)}$.
Hence $G=\bigcup_{i\in S}G_{i}\subseteq\bigcup_{i\in T}G_{i}\subseteq\bigcap_{i\in T}H_{i}\subseteq\bigcap_{i\in S}H_{i}=H$.
The inner two sets, $\bigcup_{i\in T}G_{i}$ and $\bigcap_{i\in T}H_{i}$,
are internal.
\end{proof}
\begin{thm}[Fehrele's principle]
\label{thm:Fehrele}Every galactic halic set is internal.
\end{thm}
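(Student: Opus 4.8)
The plan is to deduce Fehrele's principle directly from the \nameref{thm:Separation} theorem, of which it is essentially the diagonal case $G = H$. A galactic halic set is, by definition, a set $X \subseteq \ns{\mathbb{U}}$ that is simultaneously galactic and halic; thus $X$ can be written both as a standard-indexed union of internal sets, witnessing that it is galactic, and as a standard-indexed intersection of internal sets, witnessing that it is halic.

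First I would regard $X$ in its two roles at once: as a galactic set $G := X$ and as a halic set $H := X$. The trivial inclusion $X \subseteq X$ then verifies the single hypothesis $G \subseteq H$ of the \nameref{thm:Separation} theorem, so applying that theorem yields an internal set $I$ with $X \subseteq I \subseteq X$.

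Finally, the two inclusions $X \subseteq I$ and $I \subseteq X$ force $I = X$, whence $X$ itself is internal, as required. I do not expect any genuine obstacle here: all of the work has already been absorbed into the \nameref{thm:Separation} theorem and, beneath it, into \nameref{lem:Overspill} and saturation. The only point worth checking is that the defining data of a galactic halic set really do supply the galactic and halic hypotheses of Separation at the same time, which is immediate from the definitions.
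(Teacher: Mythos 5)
Your proposal is correct and matches the paper's own proof exactly: both apply the \nameref{thm:Separation} theorem to the trivial inclusion $X \subseteq X$ (viewing the galactic halic set in its two roles simultaneously) to obtain an internal $I$ with $X \subseteq I \subseteq X$, forcing $X = I$. There is nothing to add.
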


\begin{proof}
Let $I$ be such a set. Obviously $I\subseteq I$. By \nameref{thm:Separation}
there is an internal set $J$ with $I\subseteq J\subseteq I$, so
$I=J$ is internal.
\end{proof}
\nameref{thm:Fehrele} (in the form of \nameref{thm:Separation})
tells us that \nameref{lem:Overspill} also holds for halic subsets.
\begin{cor}[Halic overspill]
\label{cor:Halic-Overspill}Let $\varDelta$ be a directed set and
$H$ a halic subset of $\ns{\varDelta}$.
\begin{enumerate}
\item If $H$ contains all sufficiently large limited elements of $\ns{\varDelta}$,
then it also contains all sufficiently small illimited elements of
$\ns{\varDelta}$.
\item If $H$ contains arbitrarily large limited elements of $\ns{\varDelta}$,
then it also contains arbitrarily small illimited elements of $\ns{\varDelta}$.
\end{enumerate}
\end{cor}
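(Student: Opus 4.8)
The plan is to reduce each statement to its internal counterpart, \nameref{lem:Overspill}, exactly as the preceding remark suggests: one sandwiches a suitable galactic subset of $H$ between $H$ and an internal set by means of \nameref{thm:Separation}. The common scheme is to produce a galactic set $G$ with $G\subseteq H$ that still witnesses the relevant largeness hypothesis, apply \nameref{thm:Separation} to obtain an internal $I$ with $G\subseteq I\subseteq H$, apply the internal \nameref{lem:Overspill} to $I$, and finally transport the conclusion back to $H$ using $I\subseteq H$.

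For (1), suppose $H$ contains every limited $\delta\geq L_{0}$ for some limited $L_{0}\in\ns{\varDelta}$. First I would observe that the set $G:=\set{\delta\in\ns{\varDelta}|\delta\geq L_{0}\text{ and }\delta\text{ is limited}}$ is galactic: since $\delta$ is limited iff $\delta\leq M$ for some $M\in\varDelta$, we have $G=\bigcup_{M\in\varDelta}\left[L_{0},M\right]$, a union of internal intervals indexed by the standard set $\varDelta$. By hypothesis $G\subseteq H$, so \nameref{thm:Separation} yields an internal $I$ with $G\subseteq I\subseteq H$. Now $I$ is internal and contains all sufficiently large limited elements (namely all limited $\delta\geq L_{0}$), so \nameref{lem:Overspill}(1) applies to $I$ and shows that $I$ contains all sufficiently small illimited elements; since $I\subseteq H$, so does $H$.

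For (2), the hypothesis only supplies some large limited elements of $H$ rather than all of them, so the galactic subset must be assembled from witnesses, and this is the step I expect to require the most care. For each $L\in\varDelta$ choose a limited $\delta_{L}\in H$ with $\delta_{L}\geq L$. Then $G:=\bigcup_{L\in\varDelta}\set{\delta_{L}}$ is galactic, being a union of internal singletons indexed by the standard set $\varDelta$, and $G\subseteq H$. As before, \nameref{thm:Separation} gives an internal $I$ with $G\subseteq I\subseteq H$, and $I$ contains arbitrarily large limited elements (the $\delta_{L}$). Then \nameref{lem:Overspill}(2) shows that $I$, and hence $H$, contains arbitrarily small illimited elements.

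The only genuine obstacle is recognising, in each case, the correct galactic set $G\subseteq H$: in (1) it is the whole galaxy of large limited elements, which lies in $H$ automatically, whereas in (2) one must first extract a galactic family of witnesses from the merely ``arbitrarily large'' hypothesis. Once $G$ is identified, the remainder is a mechanical application of \nameref{thm:Separation} followed by \nameref{lem:Overspill}.
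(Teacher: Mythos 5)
Your proposal is correct, and for part (1) it is essentially the paper's own proof: your galactic set $G=\bigcup_{M\in\varDelta}\left[L_{0},M\right]$ is exactly the union $\bigcup_{U\in\varDelta}\left[L,U\right]$ that the paper sandwiches between $H$ and an internal set via \nameref{thm:Separation} before invoking \nameref{lem:Overspill}(1). For part (2), however, you take a genuinely different route. The paper reduces (2) to (1): fixing an illimited $U$, it forms the set of left endpoints $H':=\set{L\in\ns{\varDelta}|\left[L,U\right]\cap H\neq\varnothing}$, proves by a separate saturation argument that $H'$ is halic (commuting the nonemptiness condition past the intersection $H=\bigcap_{i\in S}H_{i}$ through finite index sets), notes that every limited element lies in $H'$, and applies part (1) to $H'$ to extract an illimited $L$ with $\left[L,U\right]\cap H\neq\varnothing$. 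You instead choose, for each $L\in\varDelta$, a limited witness $\delta_{L}\in H$ above $L$, observe that $\bigcup_{L\in\varDelta}\set{\delta_{L}}$ is galactic and contained in $H$, and apply \nameref{thm:Separation} followed by \nameref{lem:Overspill}(2) directly. This is sound: the paper's definition of galactic explicitly permits external families of internal sets indexed by a standard set, and each singleton $\set{\delta_{L}}$ is internal, so the sandwich argument goes through verbatim, and the resulting internal $I$ contains arbitrarily large limited elements. Your version is more uniform --- both parts follow the same scheme of ``find a galactic witness set inside $H$, separate, overspill'' --- and it avoids the extra saturation computation for the halicity of $H'$; the price is an external appeal to choice to assemble the witnesses $\delta_{L}$, which the paper's proof of (2) sidesteps by working with the halic structure of $H$ itself rather than with chosen elements of it.
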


\begin{proof}
\begin{enumerate}
\item Fix a limited $L\in\ns{\varDelta}$ such that every limited element
of $\ns{\varDelta}$ above $L$ belongs to $H$, i.e., $\bigcup_{U\in\varDelta}\left[L,U\right]\subseteq H$.
By \nameref{thm:Separation}, there exists an internal set $I$ such
that $\bigcup_{U\in\varDelta}\left[L,U\right]\subseteq I\subseteq H$.
By \nameref{lem:Overspill}, $I$ has all sufficiently small illimited
elements, and so does $H$.
\item Fix a family $\set{H_{i}|i\in S}$ of internal sets indexed by a standard
set such that $H=\bigcap_{i\in S}H_{i}$. Let $U\in\ns{\varDelta}$
be illimited. The set defined by
\[
H':=\set{L\in\ns{\varDelta}|\left[L,U\right]\cap H\neq\varnothing}
\]
is halic, because
\begin{align*}
H' & =\set{L\in\ns{\varDelta}|\left[L,U\right]\cap\bigcap_{i\in S}H_{i}\neq\varnothing}\\
 & =\set{L\in\ns{\varDelta}|\forall T\in\mathcal{P}_{\mathrm{fin}}\left(S\right).\left[L,U\right]\cap\bigcap_{i\in T}H_{i}\neq\varnothing}\\
 & =\bigcap_{T\in\mathcal{P}_{\mathrm{fin}}\left(S\right)}\set{L\in\ns{\varDelta}|\left[L,U\right]\cap H_{i}\neq\varnothing},
\end{align*}
where the second equality is by saturation. Every limited element
of $\prescript{\ast}{}{\varDelta}$ belongs to $H'$ by assumption.
By (1), $H'$ has an illimited element $L$ of $\ns{\varDelta}$,
which satisfies that $\left[L,U\right]\cap H\neq\varnothing$. It
follows that $H$ has an illimited element of $\ns{\varDelta}$ below
$U$ (and above $L$).\qedhere
\end{enumerate}
\end{proof}
\begin{cor}[Galactic underspill]
\label{cor:Galactic-Underspill}Let $\varDelta$ be a directed set
and $G$ a galactic subset of $\ns{\varDelta}$.
\begin{enumerate}
\item If $G$ contains all sufficiently small illimited elements of $\ns{\varDelta}$,
then it also contains all sufficiently large limited elements of $\ns{\varDelta}$.
\item If $G$ contains arbitrarily small illimited elements of $\ns{\varDelta}$,
then it also contains arbitrarily large limited elements of $\ns{\varDelta}$.
\end{enumerate}
\end{cor}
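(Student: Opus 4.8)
The plan is to deduce the corollary from \nameref{cor:Halic-Overspill} by complementation and contraposition, so that no fresh saturation argument is required. First I would record the basic duality: if $G=\bigcup_{i\in S}G_{i}$ is galactic with each $G_{i}$ internal, then its complement $H:=\ns{\varDelta}\setminus G=\bigcap_{i\in S}\left(\ns{\varDelta}\setminus G_{i}\right)$ is halic, since the complement of an internal set is internal. Thus every galactic set is the complement of a halic set, and the two notions are interchanged by set-theoretic complementation.

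Next I would translate the four hypotheses and conclusions into quantifier form and apply De Morgan. Membership of an element in $G$ is exactly non-membership in $H$, and the phrases ``contains all sufficiently large'' and ``contains arbitrarily large'' are negations of one another (likewise for ``small''). Writing out the negations gives the dictionary: the negation of ``$G$ contains all sufficiently large limited elements'' is ``$H$ contains arbitrarily large limited elements'', the negation of ``$G$ contains all sufficiently small illimited elements'' is ``$H$ contains arbitrarily small illimited elements'', and symmetrically with ``all sufficiently'' and ``arbitrarily'' exchanged.

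With this dictionary, part (1) of the present corollary is precisely the contrapositive of part (2) of \nameref{cor:Halic-Overspill} applied to $H$, and part (2) is the contrapositive of part (1). For instance, to prove part (1), I would assume $G$ contains all sufficiently small illimited elements and suppose, towards a contradiction, that it fails to contain all sufficiently large limited elements; the latter means $H$ contains arbitrarily large limited elements, whence by \nameref{cor:Halic-Overspill} (2) the halic set $H$ contains arbitrarily small illimited elements, contradicting the former. Part (2) is handled in the same way using \nameref{cor:Halic-Overspill} (1).

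I expect the only delicate point to be the bookkeeping of quantifiers: one must track that the two parts swap their indices under the duality (galactic (1) pairs with halic (2), and galactic (2) with halic (1)), and that the roles of ``large limited'' and ``small illimited'' are exchanged consistently throughout each negation. Once this correspondence is set up correctly, the result is a purely formal consequence of \nameref{cor:Halic-Overspill} and requires no additional nonstandard machinery.
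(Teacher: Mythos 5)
Your proposal is correct and takes essentially the same route as the paper, whose entire proof reads ``Apply the contraposition of \nameref{cor:Halic-Overspill} to the complement $H:=\ns{\varDelta}\setminus G$.'' You merely make explicit the bookkeeping the paper leaves implicit --- that complements of galactic sets are halic and that the two parts swap under the duality (galactic (1) with halic (2), galactic (2) with halic (1)) --- and your pairing of the quantifier negations is accurate.
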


\begin{proof}
Apply the contraposition of \nameref{cor:Halic-Overspill} to the
complement $H:=\ns{\varDelta}\setminus G$.
\end{proof}

\section{\label{sec:Some-applications-in-topology}Some applications in topology}

The following is a generalisation of \cite[Theorem 4.3.10]{Rob66}
to nets in uniform spaces. In most applications to (small-scale) topology,
\nameref{thm:Fehrele} will be appeared in this form.
\begin{lem}[Robinson's lemma]
\label{lem:Robinson}Let $X$ be a uniform space, $\varDelta$ a
directed set, and $\set{x_{\delta}}_{\delta\in\ns{\varDelta}}$ and
$\set{y_{\delta}}_{\delta\in\ns{\varDelta}}$ internal nets in $\ns{X}$.
\begin{enumerate}
\item If $x_{\delta}\approx_{X}y_{\delta}$ holds for all sufficiently large
limited $\delta\in\ns{\varDelta}$, then it also holds for all sufficiently
small illimited $\delta\in\ns{\varDelta}$.
\item If $x_{\delta}\approx_{X}y_{\delta}$ holds for arbitrarily large
limited $\delta\in\ns{\varDelta}$, then it also holds for arbitrarily
small illimited $\delta\in\ns{\varDelta}$.
\end{enumerate}
\end{lem}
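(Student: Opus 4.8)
The plan is to identify the ``agreement set''
\[
H := \set{\delta \in \ns{\varDelta} | x_{\delta} \approx_{X} y_{\delta}}
\]
as a halic subset of $\ns{\varDelta}$ and then read off both parts directly from \nameref{cor:Halic-Overspill}. Everything hinges on verifying that $H$ is halic; once that is done, the two implications are immediate.

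To establish halicity I would unfold the nonstandard description of the uniform infinitesimal relation. Let $\mathcal{U}$ denote the uniformity of $X$, which is a standard set of entourages. By definition, $x_{\delta} \approx_{X} y_{\delta}$ holds exactly when $\left(x_{\delta}, y_{\delta}\right) \in \ns{V}$ for every standard $V \in \mathcal{U}$, so that
\[
H = \bigcap_{V \in \mathcal{U}} \set{\delta \in \ns{\varDelta} | \left(x_{\delta}, y_{\delta}\right) \in \ns{V}}.
\]
Each set in this intersection is internal, because the nets $\set{x_{\delta}}_{\delta \in \ns{\varDelta}}$ and $\set{y_{\delta}}_{\delta \in \ns{\varDelta}}$ are internal by hypothesis and each $\ns{V}$ is internal; moreover the index set $\mathcal{U}$ is standard. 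Hence $H$ is an intersection of internal sets indexed by a standard set, i.e.\ halic.

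Granting that $H$ is halic, both statements follow at once. The hypothesis of~(1) asserts that $H$ contains all sufficiently large limited elements of $\ns{\varDelta}$; applying \nameref{cor:Halic-Overspill}(1) gives that $H$ contains all sufficiently small illimited elements, which is precisely the conclusion of~(1). Similarly, the hypothesis of~(2) says that $H$ contains arbitrarily large limited elements, and \nameref{cor:Halic-Overspill}(2) then yields arbitrarily small illimited elements of $H$, matching the conclusion of~(2).

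The single genuine step is the halicity of $H$, and the point to be careful about there is that the entourages (equivalently, any base of the uniformity) constitute a \emph{standard} index set, so that the displayed intersection is a legitimate halic representation rather than merely an intersection over some nonstandard family. Once this is secured, Robinson's lemma is a one-line corollary of halic overspill.
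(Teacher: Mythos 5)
Your proposal is correct and is exactly the paper's proof: the paper also applies Halic Overspill to the same set $H=\set{\delta\in\ns{\varDelta}|x_{\delta}\approx_{X}y_{\delta}}$, leaving the verification that $H$ is halic implicit, which you spell out correctly by writing $H$ as an intersection of internal sets indexed by the standard uniformity $\mathcal{U}_{X}$.
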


\begin{proof}
Apply \nameref{cor:Halic-Overspill} to $H:=\set{\delta\in\ns{\varDelta}|x_{\delta}\approx_{X}y_{\delta}}$.
\end{proof}
\begin{rem}
When $X$ is a pseudometric space and $\varDelta=\mathbb{N}$, this
lemma can be proved only using \nameref{lem:Overspill} for internal
sets. Consider the internal set
\[
I:=\set{n\in\ns{\mathbb{N}}|\ns{d_{X}}\left(x_{n},y_{n}\right)\leq2^{-n}}.
\]
By assumption, $I$ has all sufficiently (resp. arbitrarily) large
limited elements. By \nameref{lem:Overspill}, $I$ has all sufficiently
(resp. arbitrarily) small illimited elements. Clearly, for each illimited
$n\in I$, we have that $x_{n}\approx_{X}y_{n}$, because $\ns{d_{X}}\left(x_{n},y_{n}\right)\leq2^{-n}\approx_{\mathbb{R}}0$.
\end{rem}

Recall that two Cauchy nets $\set{x_{\delta}}_{\delta\in\varDelta}$
and $\set{y_{\gamma}}_{\gamma\in\varGamma}$ in a uniform space $X$
are said to be \emph{equivalent} if for each $U\in\mathcal{U}_{X}$
there are $\delta_{0}\in\varDelta$ and $\gamma_{0}\in\varGamma$
such that $\left(x_{\delta},y_{\gamma}\right)\in U$ holds for all
$\delta\geq\delta_{0}$ and $\gamma\geq\gamma_{0}$. This notion has
the following simple nonstandard characterisation:
\begin{lem}
\label{lem:Equivalence}Let $\set{x_{\delta}}_{\delta\in\varDelta}$
and $\set{y_{\gamma}}_{\gamma\in\varGamma}$ nets in a uniform space
$X$. The following are equivalent:
\begin{enumerate}
\item $\set{x_{\delta}}_{\delta\in\varDelta}$ and $\set{y_{\gamma}}_{\gamma\in\varGamma}$
are equivalent Cauchy nets;
\item for any illimited $\delta$ and $\gamma$, we have $\ns{x_{\delta}}\approx_{X}\ns{y_{\gamma}}$.
\end{enumerate}
\end{lem}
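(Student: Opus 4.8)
The plan is to prove both implications directly by \emph{transfer}, turning each standard ``eventually'' or ``cofinally'' statement about the nets into the problem of producing internal witnesses in $\ns{\varDelta}$ and $\ns{\varGamma}$, which I will always take to be illimited. Before starting I record two facts used throughout. First, $a\approx_{X}b$ means exactly that $\left(a,b\right)\in\ns{U}$ for every standard entourage $U\in\mathcal{U}_{X}$, and $\approx_{X}$ is an equivalence relation on $\ns{X}$: symmetry comes from the fact that each entourage contains a symmetric one, and transitivity from choosing, for a given $U$, an entourage $V$ with $V\circ V\subseteq U$. Second, both $\ns{\varDelta}$ and $\ns{\varGamma}$ possess illimited elements; indeed the internal sets $\set{\sigma\in\ns{\varDelta}|\sigma\geq L}$ for $L\in\varDelta$ have the finite intersection property by directedness, so saturation supplies an element lying in all of them, which is then illimited. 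I also use the immediate observation that if $\delta_{0}$ is illimited then every $\delta\geq\delta_{0}$ is illimited, since the order is transitive.

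For $(1)\Rightarrow(2)$, which is the easy direction, I would fix illimited $\delta,\gamma$ and an arbitrary standard $U\in\mathcal{U}_{X}$. Equivalence furnishes standard $\delta_{0}\in\varDelta$ and $\gamma_{0}\in\varGamma$ with $\left(x_{\delta},y_{\gamma}\right)\in U$ whenever $\delta\geq\delta_{0}$ and $\gamma\geq\gamma_{0}$; transferring this, and noting that the illimited $\delta,\gamma$ dominate the standard $\delta_{0},\gamma_{0}$, gives $\left(\ns{x_{\delta}},\ns{y_{\gamma}}\right)\in\ns{U}$. Since $U$ was arbitrary, $\ns{x_{\delta}}\approx_{X}\ns{y_{\gamma}}$. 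Note that only the equivalence clause of (1), not the Cauchy clause, is needed here.

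For $(2)\Rightarrow(1)$, I would verify the equivalence clause and the two Cauchy clauses by the same recipe. To establish equivalence for a fixed $U$, it suffices by transfer to exhibit internal $\delta_{0}\in\ns{\varDelta}$ and $\gamma_{0}\in\ns{\varGamma}$ such that $\left(\ns{x_{\delta}},\ns{y_{\gamma}}\right)\in\ns{U}$ for all $\delta\geq\delta_{0}$ and $\gamma\geq\gamma_{0}$; choosing $\delta_{0},\gamma_{0}$ illimited makes every such $\delta,\gamma$ illimited, so (2) applies. To see that $\set{x_{\delta}}$ is Cauchy I would, after transfer, seek an illimited $\delta_{0}$ with $\left(\ns{x_{\delta}},\ns{x_{\delta'}}\right)\in\ns{U}$ for all $\delta,\delta'\geq\delta_{0}$; fixing one illimited $\gamma$, clause (2) yields $\ns{x_{\delta}}\approx_{X}\ns{y_{\gamma}}\approx_{X}\ns{x_{\delta'}}$, and transitivity closes the gap. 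The net $\set{y_{\gamma}}$ is handled symmetrically.

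The main obstacle is confined to $(2)\Rightarrow(1)$: recovering the Cauchy property from the \emph{cross}-relation between the two nets genuinely uses that $\approx_{X}$ is transitive, via the refinement $V\circ V\subseteq U$, together with the existence of at least one illimited $\gamma\in\ns{\varGamma}$ to serve as a bridge between $\ns{x_{\delta}}$ and $\ns{x_{\delta'}}$. Once these two points are secured, each transfer step is routine, the only thing to double-check being that an illimited lower bound forces all indices above it to be illimited.
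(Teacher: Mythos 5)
Your proof is correct and complete; it is precisely the routine transfer-plus-saturation argument that the paper omits with the single word ``Easy''. In particular, you correctly handle the only two points where care is needed: the existence of illimited indices (saturation applied to the internal final segments $\set{\sigma\in\ns{\varDelta}|\sigma\geq L}$, $L\in\varDelta$, using directedness for the finite intersection property) and the recovery of Cauchyness of each net from the cross-relation alone, via an illimited bridging index in the other directed set together with the symmetry and transitivity of $\approx_{X}$.
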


\begin{proof}
Easy.
\end{proof}
If one of equivalent Cauchy nets is convergent, then the others are
also convergent to the same point. This well-known fact can easily
be shown by combining the nonstandard characterisations of convergence
and equivalence, so the proof is left to the reader.
\begin{lem}
\label{lem:Product-Directed-Set}Let $\varDelta$ and $\varGamma$
be directed sets and $\left(\delta,\gamma\right)\in\ns{\varDelta}\times\ns{\varGamma}$.
\begin{enumerate}
\item $\left(\delta,\gamma\right)$ is limited if and only if both $\delta$
and $\gamma$ are limited.
\item $\left(\delta,\gamma\right)$ is illimited if and only if both $\delta$
and $\gamma$ are illimited.
\end{enumerate}
\end{lem}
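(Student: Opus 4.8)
The plan is to reduce everything to the componentwise structure of the product order. Recall that $\varDelta\times\varGamma$ is directed by the product order, $\left(L,M\right)\leq\left(L',M'\right)$ iff $L\leq L'$ and $M\leq M'$, and that by transfer $\ns{\left(\varDelta\times\varGamma\right)}=\ns{\varDelta}\times\ns{\varGamma}$ inherits the transferred order, so that for a standard pair $\left(L,M\right)$ we have $\left(L,M\right)\leq\left(\delta,\gamma\right)$ iff $L\leq\delta$ and $M\leq\gamma$, and dually for the reverse inequality. Both parts then become straightforward unfoldings of the definitions of \emph{limited} (dominated by some standard element) and \emph{illimited} (dominating every standard element).

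For (1), the backward direction is immediate: if $\delta\leq L$ and $\gamma\leq M$ for standard $L\in\varDelta$ and $M\in\varGamma$, then the standard pair $\left(L,M\right)$ dominates $\left(\delta,\gamma\right)$, so $\left(\delta,\gamma\right)$ is limited. Conversely, a standard bound $\left(\delta,\gamma\right)\leq\left(L,M\right)$ splits componentwise into $\delta\leq L$ and $\gamma\leq M$, exhibiting each coordinate as limited.

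For (2), the backward direction is again direct: if every standard $L$ satisfies $L\leq\delta$ and every standard $M$ satisfies $M\leq\gamma$, then any standard pair $\left(L,M\right)$ satisfies $\left(L,M\right)\leq\left(\delta,\gamma\right)$, so $\left(\delta,\gamma\right)$ is illimited. The forward direction is the only place where a little care is needed, and is where I expect the sole (minor) obstacle to lie: illimitedness of $\left(\delta,\gamma\right)$ a priori yields domination only over \emph{pairs}, whereas I must verify $L\leq\delta$ for an arbitrary standard $L\in\varDelta$ in isolation. The remedy is to pair $L$ with any witness $M\in\varGamma$, which exists because directed sets are nonempty; then $\left(L,M\right)\leq\left(\delta,\gamma\right)$ forces $L\leq\delta$, and the argument for $\gamma$ is symmetric. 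The whole proof is thus a routine translation between the product order and its coordinates, the only substantive hypothesis being the nonemptiness of $\varDelta$ and $\varGamma$ invoked in the forward direction of (2).
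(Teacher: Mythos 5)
Your proof is correct and is precisely the routine definition-unfolding argument the paper intends --- its own proof is simply the word ``Easy,'' so you have supplied exactly the omitted details, including the one point of genuine care (splitting the pairwise domination in the forward direction of (2) by pairing a lone standard $L$ with a witness $M$). Note also that the nonemptiness you flag as a substantive hypothesis is automatic here: the standing assumption $\left(\delta,\gamma\right)\in\ns{\varDelta}\times\ns{\varGamma}$ already forces $\varGamma\neq\varnothing$ (and likewise $\varDelta\neq\varnothing$) by transfer.
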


\begin{proof}
Easy.
\end{proof}
Notice that if either $\delta$ or $\gamma$ is limited and the other
is illimited, then $\left(\delta,\gamma\right)$ is neither limited
nor illimited.
\begin{thm}
\label{thm:weak-Moore-Osgood}Let $X$ be a uniform space, $\varDelta$
and $\varGamma$ directed sets, $f\colon\varDelta\times\varGamma\to X$,
$g\colon\varGamma\to X$ and $h\colon\varDelta\to X$. Suppose that
\begin{enumerate}
\item $\set{f\left(\delta,\cdot\right)}_{\delta\in\varDelta}$ is uniformly
convergent to $g$; and
\item $\set{f\left(\cdot,\gamma\right)}_{\gamma\in\varGamma}$ is uniformly
convergent to $h$.
\end{enumerate}
Then $f,g,h$ are equivalent Cauchy nets. Hence if one of $f,g,h$
is convergent to some point, then the other two are also convergent
to the same point.

\end{thm}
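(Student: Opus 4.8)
The plan is to push everything through the nonstandard dictionary and reduce the whole statement to the transitivity of the infinitesimal relation $\approx_{X}$, using Lemma~\ref{lem:Equivalence} to read off equivalence of Cauchy nets and Lemma~\ref{lem:Product-Directed-Set} to deal with the product index set. First I would transfer the two hypotheses. Uniform convergence of $\set{f\left(\delta,\cdot\right)}_{\delta}$ to $g$ says, after transfer, that
\[
\ns{f}\left(\delta,\gamma\right)\approx_{X}\ns{g}\left(\gamma\right)\quad\text{for every illimited }\delta\in\ns{\varDelta}\text{ and every }\gamma\in\ns{\varGamma};
\]
symmetrically, uniform convergence of $\set{f\left(\cdot,\gamma\right)}_{\gamma}$ to $h$ gives $\ns{f}\left(\delta,\gamma\right)\approx_{X}\ns{h}\left(\delta\right)$ for every $\delta\in\ns{\varDelta}$ and every illimited $\gamma\in\ns{\varGamma}$. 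The essential point here is that uniformity is exactly what lets these closeness relations quantify over all indices, including illimited ones; this is why, in contrast with the classical strong form of the Moore--Osgood theorem, the present weak form needs no appeal to \nameref{lem:Robinson}.

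Next I would use that $\approx_{X}$ is an equivalence relation on $\ns{X}$, transitivity coming from the composition axiom of the uniformity. For any illimited $\delta\in\ns{\varDelta}$ and $\gamma\in\ns{\varGamma}$ the two displayed facts chain together as
\[
\ns{g}\left(\gamma\right)\approx_{X}\ns{f}\left(\delta,\gamma\right)\approx_{X}\ns{h}\left(\delta\right).
\]
By Lemma~\ref{lem:Equivalence} this already shows that $g$ and $h$ are equivalent Cauchy nets; in particular each of $g$ and $h$ is Cauchy, so $\ns{g}\left(\gamma\right)\approx_{X}\ns{g}\left(\gamma'\right)$ and $\ns{h}\left(\delta\right)\approx_{X}\ns{h}\left(\delta'\right)$ for all illimited $\gamma,\gamma'$ and all illimited $\delta,\delta'$.

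It remains to bring in $f$, which is indexed by $\varDelta\times\varGamma$. By Lemma~\ref{lem:Product-Directed-Set} an index of $f$ is illimited exactly when both coordinates are, so for illimited $\left(\delta,\gamma\right)$ and illimited $\gamma'$ the first displayed fact together with the fact that $g$ is Cauchy gives $\ns{f}\left(\delta,\gamma\right)\approx_{X}\ns{g}\left(\gamma\right)\approx_{X}\ns{g}\left(\gamma'\right)$; by Lemma~\ref{lem:Equivalence}, $f$ and $g$ are equivalent Cauchy nets, and the symmetric argument handles $f$ and $h$. In particular $f$ is Cauchy, so $f,g,h$ are pairwise equivalent Cauchy nets, which is the first assertion; the final sentence is then immediate from the fact recorded before Lemma~\ref{lem:Product-Directed-Set} that a net equivalent to a convergent Cauchy net converges to the same limit. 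I expect the only real difficulty to be bookkeeping: getting the two transferred conditions exactly right---especially that they range over all indices, not merely limited ones---and keeping the three index sets $\varDelta$, $\varGamma$, $\varDelta\times\varGamma$ straight when applying Lemma~\ref{lem:Equivalence}.
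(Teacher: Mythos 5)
Your proof is correct and follows essentially the same route as the paper: transfer both uniform convergence hypotheses to infinitesimal closeness ranging over all (including illimited) indices, chain $\ns{g}\left(\gamma\right)\approx_{X}\ns{f}\left(\delta,\gamma\right)\approx_{X}\ns{h}\left(\delta\right)$ by transitivity of $\approx_{X}$, and conclude via \prettyref{lem:Equivalence} and \prettyref{lem:Product-Directed-Set}. Your version merely spells out the pairwise equivalences that the paper compresses into the single statement $\ns{g}\left(\gamma\right)\approx\ns{f}\left(\delta',\gamma'\right)\approx\ns{h}\left(\delta\right)$ for all illimited $\delta,\delta',\gamma,\gamma'$.
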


\begin{proof}
By the nonstandard characterisation of uniform convergence, $\ns{g}\left(\gamma\right)\approx_{X}\ns{f}\left(\delta,\gamma\right)\approx_{X}\ns{h}\left(\delta\right)$
holds for all illimited elements $\delta$ and $\gamma$ of $\ns{\varDelta}$
and $\ns{\varGamma}$, respectively. It follows that $\ns{g}\left(\gamma\right)\approx\ns{f}\left(\delta',\gamma'\right)\approx\ns{h}\left(\delta\right)$
for all illimited $\delta,\delta',\gamma,\gamma'$. Hence $f,g,h$
are equivalent by \prettyref{lem:Equivalence} and \prettyref{lem:Product-Directed-Set}.
\end{proof}
The proof above is quite easy and does not depend on saturation (only
depends on enlargement). On the other hand, the following theorem
(cf. \cite[pp. 593--594]{Osg12}) depends on saturation.
\begin{thm}[Moore-Osgood theorem]
\label{thm:Moore-Osgood}Let $X$ be a uniform space, $\varDelta$
and $\varGamma$ directed sets, $f\colon\varDelta\times\varGamma\to X$,
$g\colon\varGamma\to X$ and $h\colon\varDelta\to X$. Suppose that
\begin{enumerate}
\item $\set{f\left(\delta,\cdot\right)}_{\delta\in\varDelta}$ is uniformly
convergent to $g$; and
\item $\set{f\left(\cdot,\gamma\right)}_{\gamma\in\varGamma}$ is pointwise
convergent to $h$.
\end{enumerate}
Then $f$ and $g$ are equivalent Cauchy nets, and $h$ is Cauchy.
Moreover, if $h$ is convergent to some point, then $f$ and $g$
are also convergent to the same point. Furthermore, if either $f$
or $g$ is convergent to some point, then $h$ is also convergent
to the same point. Hence if one of $f,g,h$ is convergent to some
point, then the other two are also convergent to the same point.

\end{thm}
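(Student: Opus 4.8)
The plan is to pass to the nonstandard characterisations and reduce everything, via \prettyref{lem:Equivalence} and \prettyref{lem:Product-Directed-Set}, to a handful of statements about monads. By the nonstandard characterisation of uniform convergence, hypothesis (1) reads $\ns{f}\left(\delta,\gamma\right)\approx_{X}\ns{g}\left(\gamma\right)$ for every illimited $\delta\in\ns{\varDelta}$ and \emph{every} $\gamma\in\ns{\varGamma}$. Hypothesis (2) gives $\ns{f}\left(\delta,\gamma\right)\approx_{X}h\left(\delta\right)$ only for every \emph{standard} $\delta\in\varDelta$ and every illimited $\gamma$; by transfer, however, for every $\delta\in\ns{\varDelta}$ and every internal entourage $U$ there is a (possibly illimited) $\gamma_{0}$ with $\left(\ns{f}\left(\delta,\gamma\right),\ns{h}\left(\delta\right)\right)\in U$ for all $\gamma\geq\gamma_{0}$.

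The first task is to show that $g$, and hence $f$, is Cauchy. Fixing illimited $\gamma,\gamma'$, consider the two internal nets $\delta\mapsto\ns{f}\left(\delta,\gamma\right)$ and $\delta\mapsto\ns{f}\left(\delta,\gamma'\right)$. By the pointwise hypothesis both are $\approx_{X}h\left(\delta\right)$ at every standard $\delta$, hence infinitely close at arbitrarily large limited $\delta$ (the standard elements being cofinal among the limited ones); \prettyref{lem:Robinson}(2) then produces an illimited $\delta$ at which $\ns{f}\left(\delta,\gamma\right)\approx_{X}\ns{f}\left(\delta,\gamma'\right)$. Feeding this into the uniform hypothesis yields $\ns{g}\left(\gamma\right)\approx_{X}\ns{f}\left(\delta,\gamma\right)\approx_{X}\ns{f}\left(\delta,\gamma'\right)\approx_{X}\ns{g}\left(\gamma'\right)$, so $g$ is Cauchy. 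Using uniform convergence once more, $\ns{f}\left(\delta,\gamma\right)\approx_{X}\ns{g}\left(\gamma\right)\approx_{X}\ns{g}\left(\gamma'\right)\approx_{X}\ns{f}\left(\delta',\gamma'\right)$ for all illimited $\left(\delta,\gamma\right),\left(\delta',\gamma'\right)$ (illimited coordinatewise by \prettyref{lem:Product-Directed-Set}), so $f$ is Cauchy and $f\sim g$ by \prettyref{lem:Equivalence}.

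The crux is the pointwise limit $h$. I would isolate a matching claim: \emph{for every illimited $\delta$ there is an illimited $\gamma$ with $\ns{f}\left(\delta,\gamma\right)\approx_{X}\ns{h}\left(\delta\right)$}. Fixing such a $\delta$, the transferred convergence supplies, for each standard $U\in\mathcal{U}_{X}$, a threshold $\gamma_{0}\left(U\right)$; the upward-closed internal sets $\set{\gamma|\gamma\geq\gamma_{0}\left(U\right)}$ are indexed by the standard set $\mathcal{U}_{X}$ and have the finite intersection property, so by saturation their intersection contains an illimited $\gamma$, which then lies in $\bigcap_{U}\ns{U}\left[\ns{h}\left(\delta\right)\right]$, i.e. $\ns{f}\left(\delta,\gamma\right)\approx_{X}\ns{h}\left(\delta\right)$. \textbf{This is the main obstacle}, and exactly where saturation is indispensable: \prettyref{lem:Robinson} is unavailable here because the thresholds $\gamma_{0}\left(U\right)$ may themselves be illimited, so $\ns{f}\left(\delta,\cdot\right)$ need not be close to $\ns{h}\left(\delta\right)$ at any limited index, and no overspill premise is in force. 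Granting the claim, for illimited $\delta,\delta'$ I match them to illimited $\gamma,\gamma'$ and invoke that $f$ is Cauchy to get $\ns{h}\left(\delta\right)\approx_{X}\ns{f}\left(\delta,\gamma\right)\approx_{X}\ns{f}\left(\delta',\gamma'\right)\approx_{X}\ns{h}\left(\delta'\right)$; the same chain, now routed through $\ns{g}\left(\gamma\right)\approx_{X}\ns{g}\left(\gamma'\right)$, shows that $h$ is Cauchy and $h\sim f\sim g$.

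It remains only to collect the pieces: $f,g,h$ are pairwise equivalent Cauchy nets. The ``moreover'', ``furthermore'' and ``hence'' clauses then follow at once from the fact, recorded just before the statement, that if one member of a family of equivalent Cauchy nets converges to a point then so do the others, to the same point.
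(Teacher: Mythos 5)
Your proof is correct, but its second half takes a genuinely different route from the paper's. The first half (Cauchiness of $g$ via \nameref{lem:Robinson}, then $f\sim g$) coincides with the paper's argument. For $h$, the paper uses saturation in the \emph{entourage} direction: it picks a single internal $U\in\ns{\mathcal{U}_{X}}$ with $U\subseteq{\approx_{X}}$, transfers hypothesis (2) to get one sufficiently large $\gamma\in\ns{\varGamma}$ serving two fixed illimited $\delta,\delta'$ simultaneously, and concludes that $h$ is Cauchy; the convergence-transfer clauses are then handled by two further Robinson-lemma claims (if $h\to x$ then $g\to x$; if $f\to x$ then $h$ \emph{clusters} at $x$, which together with Cauchiness gives convergence). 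You instead saturate in the \emph{index} direction: fixing an illimited $\delta$, you intersect the internal threshold cones $\set{\gamma\in\ns{\varGamma}|\gamma\geq\gamma_{0}\left(U\right)}$, $U\in\mathcal{U}_{X}$, to produce one illimited $\gamma$ with $\ns{f}\left(\delta,\gamma\right)\approx_{X}\ns{h}\left(\delta\right)$. This matching claim is stronger than what the paper extracts and pays off: it yields at once that $h$ is Cauchy \emph{and} that $h\sim f\sim g$, so all convergence transfers follow uniformly from the recorded fact that equivalent Cauchy nets share limits, with no need for the cluster-point detour. Both arguments are ``not purely nonstandard'' in the same sense --- each invokes the $\varepsilon\delta$-form of hypothesis (2) via transfer --- and your diagnosis of where saturation is indispensable (the thresholds $\gamma_{0}\left(U\right)$ may be illimited, so Robinson's lemma has no overspill premise to act on) matches the paper's own remark. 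One small repair is needed: saturation applied to the cones $\set{\gamma|\gamma\geq\gamma_{0}\left(U\right)}$ alone gives only a nonempty intersection, not an illimited member. Enlarge the family by the cones $\set{\gamma|\gamma\geq\gamma_{1}}$ for $\gamma_{1}\in\varGamma$; the finite intersection property survives by (transferred) directedness of $\ns{\varGamma}$, and every point of the enlarged intersection dominates $\varGamma$, hence is illimited as your argument requires.
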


\begin{proof}
Applying the nonstandard characterisations of convergences (\cite[Theorem 4.2.4]{Rob66}
for pointwise convergence and \cite[Theorem 4.6.1]{Rob66} for uniform
convergence), we obtain:
\begin{enumerate}
\item[(1')] $\ns{f}\left(\delta,\gamma\right)\approx_{X}\ns{g}\left(\gamma\right)$
for all $\gamma\in\ns{\varGamma}$ and for all illimited $\delta\in\ns{\varDelta}$;
and
\item[(2')] $\ns{f}\left(\delta,\gamma\right)\approx_{X}h\left(\delta\right)$
for all $\delta\in\varDelta$ and for all illimited $\gamma\in\ns{\varGamma}$.
\end{enumerate}
\begin{claim*}
$g$ is Cauchy.
\end{claim*}

\subsubsection*{Proof}

Let $\gamma,\gamma'$ be illimited elements of $\ns{\varGamma}$.
By (2'), $\ns{f}\left(\delta,\gamma\right)\approx_{X}h\left(\delta\right)\approx_{X}\ns{f}\left(\delta,\gamma'\right)$
holds for all $\delta\in\varDelta$. By \nameref{lem:Robinson}, there
exists an illimited $\delta\in\ns{\varDelta}$ such that $\ns{f}\left(\delta,\gamma\right)\approx_{X}\ns{f}\left(\delta,\gamma'\right)$,
so $\ns{g}\left(\gamma\right)\approx_{X}\ns{f}\left(\delta,\gamma\right)\approx_{X}\ns{f}\left(\delta,\gamma'\right)\approx_{X}\ns{g}\left(\gamma'\right)$
by (1'). Hence $g$ is Cauchy by the nonstandard characterisation
of Cauchiness \cite[Theorem 8.4.26]{SL76}.
\begin{claim*}
$f$ is a Cauchy net equivalent to $g$.
\end{claim*}

\subsubsection*{Proof }

Since $g$ is Cauchy, $\ns{f}\left(\delta,\gamma\right)\approx_{X}\ns{g}\left(\gamma\right)\approx_{X}\ns{g}\left(\gamma'\right)$
holds for all illimited $\gamma,\gamma'\in\ns{\varGamma}$ and illimited
$\delta\in\ns{\varDelta}$. Hence $f$ and $g$ are equivalent Cauchy
nets by \prettyref{lem:Equivalence} and \prettyref{lem:Product-Directed-Set}.
\begin{claim*}
If $h$ is convergent to $x\in X$, then so is $g$.
\end{claim*}

\subsubsection*{Proof}

The proof is similar to the first claim. Let $\gamma\in\ns{\varGamma}$
be illimited. By (2'), $\ns{f}\left(\delta,\gamma\right)\approx_{X}h\left(\delta\right)$
holds for all $\delta\in\varDelta$. By \nameref{lem:Robinson}, $\ns{f}\left(\delta,\gamma\right)\approx_{X}\ns{h}\left(\delta\right)$
for some illimited $\delta\in\ns{\varDelta}$. Fix a witness $\delta$.
Since $h$ is convergent to $x$, $\ns{g}\left(\gamma\right)\approx_{X}\ns{f}\left(\delta,\gamma\right)\approx_{X}\ns{h}\left(\delta\right)\approx_{X}x$
by (1'). Hence $g$ is also convergent to $x$.
\begin{claim*}
If $f$ is convergent to $x\in X$, then $h$ clusters at $x$.
\end{claim*}

\subsubsection*{Proof}

Fix an illimited $\gamma\in\ns{\varGamma}$. Similarly to the previous
claim, we establish that $\ns{f}\left(\delta,\gamma\right)\approx_{X}h\left(\delta\right)$
for some illimited $\delta\in\ns{\varDelta}$ by \nameref{lem:Robinson}.
For such $\delta$ we have that $\ns{h}\left(\delta\right)\approx_{X}\ns{f}\left(\delta,\gamma\right)\approx_{X}x$.
Hence $h$ clusters at $x$ by the nonstandard characterisation of
cluster points \cite[Theorem 4.2.5]{Rob66}.
\begin{claim*}
$h$ is Cauchy.
\end{claim*}

\subsubsection*{Proof}

Unfortunately the proof of this claim is not purely nonstandard. Let
$\delta,\delta'\in\ns{\varDelta}$ be illimited. By saturation, we
can find a $U\in\ns{\mathcal{U}_{X}}$ such that $U\subseteq{\approx_{X}}$.
Applying transfer to (2), we have that $\ns{h}\left(\delta\right)\mathrel{U}\ns{f}\left(\delta,\gamma\right)$
and $\ns{h}\left(\delta'\right)\mathrel{U}\ns{f}\left(\delta',\gamma\right)$
for all sufficiently large $\gamma\in\ns{\varGamma}$. (Here we used
the $\varepsilon\delta$-style definition of pointwise convergence.)
We fix such $\gamma$. Obviously $\ns{h}\left(\delta\right)\approx_{X}\ns{f}\left(\delta,\gamma\right)$
and $\ns{h}\left(\delta'\right)\approx_{X}\ns{f}\left(\delta',\gamma\right)$.
By (1'), $\ns{f}\left(\delta,\gamma\right)\approx_{X}\ns{g}\left(\gamma\right)$
and $\ns{f}\left(\delta',\gamma\right)\approx_{X}\ns{g}\left(\gamma\right)$.
By combining them, we have that $\ns{h}\left(\delta\right)\approx_{X}\ns{h}\left(\delta'\right)$.
Hence $h$ is Cauchy.\qedhere

\end{proof}
\begin{thm}
\label{thm:Uniform-Convergence}Let $X$ be a topological (resp. uniform)
space, $Y$ a uniform spaces, $\varDelta$ a directed set, $\set{f_{\delta}\colon X\to Y}_{\delta\in\varDelta}$
a net of continuous (resp. uniformly continuous) maps, and $g\colon X\to Y$.
If $\set{f_{\delta}}_{\delta\in\varDelta}$ is uniformly convergent
to $g$, then $g$ is continuous (resp. uniformly continuous).
\end{thm}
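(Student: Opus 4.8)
The plan is to treat both cases by a single nonstandard argument, the only difference being which characterisation of continuity is applied to the $f_{\delta}$ and to $g$. Recall that uniform convergence of $\set{f_{\delta}}_{\delta\in\varDelta}$ to $g$ means precisely that $\ns{f}_{\delta}(x)\approx_{Y}\ns{g}(x)$ for every illimited $\delta\in\ns{\varDelta}$ and every $x\in\ns{X}$ (\cite[Theorem 4.6.1]{Rob66}). In the uniform case I would prove $g$ uniformly continuous by verifying $\ns{g}(x)\approx_{Y}\ns{g}(x')$ whenever $x\approx_{X}x'$, and in the topological case I would fix a standard $x_{0}\in X$ and verify $\ns{g}(x)\approx_{Y}\ns{g}(x_{0})$ whenever $x$ lies in the monad of $x_{0}$.

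Fix the relevant pair of points, which I will call $x$ and $x'$ (in the topological case $x'=x_{0}$ is standard and $x$ lies in its monad). The natural device is the chain $\ns{g}(x)\approx_{Y}\ns{f}_{\delta}(x)\approx_{Y}\ns{f}_{\delta}(x')\approx_{Y}\ns{g}(x')$, whose transitivity is available because $\approx_{Y}$ is an equivalence relation on $\ns{Y}$. The two outer links come from uniform convergence and so require $\delta$ to be illimited, while the middle link asserts that the internal map $\ns{f}_{\delta}$ respects the infinitesimal relation between $x$ and $x'$. The obstacle is that these two demands pull in opposite directions: the middle link is immediate only for standard $\delta$, where $\ns{f}_{\delta}=\ns{(f_{\delta})}$ and the (uniform) continuity of the standard map $f_{\delta}$ supplies it, whereas a general illimited $\delta$ yields an internal $\ns{f}_{\delta}$ that need not preserve monads, so we cannot simply choose $\delta$ illimited by hand.

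This is exactly the gap that \nameref{lem:Robinson} closes. The assignments $\delta\mapsto\ns{f}_{\delta}(x)$ and $\delta\mapsto\ns{f}_{\delta}(x')$ are internal nets in $\ns{Y}$, obtained by fixing the space coordinate in the internal two-variable extension of $f$. By the (uniform) continuity of each standard $f_{\delta}$, the relation $\ns{f}_{\delta}(x)\approx_{Y}\ns{f}_{\delta}(x')$ holds for every standard $\delta$, hence for arbitrarily large limited $\delta$; so by part (2) of \nameref{lem:Robinson} (with $Y$ in the role of the uniform space) it holds for some illimited $\delta$. Fixing such a $\delta$ makes all three links of the chain legitimate and yields $\ns{g}(x)\approx_{Y}\ns{g}(x')$, whence the appropriate nonstandard characterisation of (uniform) continuity delivers the conclusion. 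I expect the whole difficulty to be concentrated in the mismatch just described; once \nameref{lem:Robinson} produces the single illimited $\delta$ that simultaneously enjoys the continuity estimate and the convergence estimate, the remainder is routine manipulation of $\approx_{Y}$.
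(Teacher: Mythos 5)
Your proposal is correct and follows essentially the same route as the paper's direct proof: both apply \nameref{lem:Robinson} to the internal nets $\delta\mapsto\ns{f_{\delta}}(x)$ and $\delta\mapsto\ns{f_{\delta}}(x')$ to extract a single illimited $\delta$ satisfying the middle link, then close the chain with the nonstandard characterisation of uniform convergence. Your explicit observation that validity for all standard $\delta$ yields validity for arbitrarily large limited $\delta$ (so that part (2) of the lemma applies) is a correct spelling-out of a step the paper leaves implicit.
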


This theorem can be thought of as a corollary to the Moore--Osgood
theorem: let $\set{x_{\gamma}}_{\gamma\in\varGamma}$ be a net in
$X$ converging to $x\in X$. We then obtain three maps $F\colon\varDelta\times\varGamma\to X$,
$G\colon\varGamma\to X$, $H\colon\varDelta\to X$ defined by $F\left(\delta,\gamma\right)=f_{\delta}\left(x_{\gamma}\right)$,
$G\left(\gamma\right)=g\left(x_{\gamma}\right)$, $H\left(\delta\right)=f_{\delta}\left(x\right)$.
Since $\set{f_{\delta}}_{\delta\in\varDelta}$ is uniformly convegent
to $g$, $\set{F\left(\delta,-\right)}_{\delta\in\varDelta}$ is uniformly
convergent to $G$. Similarly, since each $f_{\delta}$ is continuous
at $x$, $\set{F\left(-,\gamma\right)}_{\gamma\in\varGamma}$ is convergent
to $H$. Moreover, $H$ is convergent to $g\left(x\right)$. Thanks
to \prettyref{thm:Moore-Osgood}, $G$ is also convergent to the same
point(s) as $H$, and therefore $\lim_{\gamma\in\varGamma}g\left(x_{\gamma}\right)=\lim_{\gamma\in\varGamma}G\left(\gamma\right)=\lim_{\delta\in\varDelta}H\left(\delta\right)=g\left(x\right)$.
The same applies to the uniform case. (Replace $x_{\gamma}$ and $x$
with maps of the form $Z\to X$, and assume $\set{x_{\gamma}}_{\gamma\in\varGamma}$
is uniformly convergent to $x$. Then apply \prettyref{thm:weak-Moore-Osgood}.)
We also give a direct proof which is simpler than the above one.
\begin{proof}
Let $x\in X$ (resp. $x\in\ns{X}$) and $y\in\mu_{X}\left(x\right)$.
For each $\delta\in\varDelta$, by the nonstandard characterisation
of continuity \cite[Theorem 4.2.7]{Rob66} (resp. uniform continuity
\cite[Theorem 8.4.23]{SL76}), it follows that $\ns{f_{\delta}}\left(x\right)\approx_{Y}\ns{f_{\delta}}\left(y\right)$.
Note that $\set{\ns{f_{\delta}}\left(x\right)}_{\delta\in\ns{\varDelta}}$
and $\set{\ns{f_{\delta}}\left(y\right)}_{\delta\in\ns{\varDelta}}$
are internal nets in $\ns{Y}$. By \nameref{lem:Robinson}, $\ns{f_{\delta}}\left(x\right)\approx_{Y}\ns{f_{\delta}}\left(y\right)$
holds also for some illimited $\delta\in\ns{\varDelta}$. By the nonstandard
characterisation of uniform convergence, we have that $\ns{g}\left(x\right)\approx_{Y}\ns{f_{\delta}}\left(x\right)\approx_{Y}\ns{f_{\delta}}\left(y\right)\approx_{Y}\ns{g}\left(y\right)$.
Since $x$ was arbitrary, $g$ is continuous (resp. uniformly continuous).
\end{proof}
\begin{defn}
Let $X$ be a set and $Y$ a uniform space. We say that a net $\set{f_{\delta}\colon X\to Y}_{\delta\in\varDelta}$
is \emph{monotonically convergent} to a map $g\colon X\to Y$ if
\begin{enumerate}
\item $\set{f_{\delta}}_{\delta\in\varDelta}$ is pointwise convergent to
$g$; and
\item there is a uniform base $\mathcal{B}_{Y}\subseteq\mathcal{U}_{Y}$
such that $f_{\gamma}\left(x\right)\mathrel{V}g\left(x\right)$ implies
$f_{\delta}\left(x\right)\mathrel{V}g\left(x\right)$ for all $x\in X$,
$V\in\mathcal{B}$, $\gamma,\delta\in\varDelta$ with $\gamma\leq\delta$.
\end{enumerate}
\end{defn}

\begin{example}
Let $Y=\mathbb{R}$ and $\varDelta=\mathbb{N}$. If $\set{f_{n}\colon X\to\mathbb{R}}_{n\in\mathbb{N}}$
is pointwise convergent to $g\colon X\to\mathbb{R}$, and if $\set{f_{n}\left(x\right)}_{n\in\mathbb{N}}$
is non-increasing or non-decreasing for each $x\in X$, then $\set{f_{n}}_{n\in\mathbb{N}}$
is monotonically convergent to $g$. The converse does not hold, because
the monotonicity condition in that definition only requires $\left|f_{n}-g\right|$
to be non-increasing.
\end{example}

\begin{lem}
\label{lem:Monotonicity}Suppose that the condition (2) holds. For
any $x\in\ns{X}$ and $\gamma,\delta\in\ns{\varDelta}$ with $\gamma\leq\delta$,
if $\ns{f_{\gamma}}\left(x\right)\approx_{Y}\ns{g}\left(x\right)$,
then $\ns{f_{\delta}}\left(x\right)\approx_{Y}\ns{g}\left(x\right)$.
\end{lem}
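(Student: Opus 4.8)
\emph{Plan.} The statement is a pure transfer result, and the plan is to reduce it to a single application of the transfer principle once the infinitesimal relation $\approx_Y$ has been expressed in terms of the given base $\mathcal{B}_Y$. Recall that $\approx_Y$, regarded as a relation on $\ns Y$, equals $\bigcap_{V\in\mathcal{U}_Y}\ns V$; since $\mathcal{B}_Y$ is a base for the uniformity $\mathcal{U}_Y$, it already equals $\bigcap_{V\in\mathcal{B}_Y}\ns V$, where $V$ ranges over standard members of the base. Hence $y\approx_Y z$ holds precisely when $\left(y,z\right)\in\ns V$ for every standard $V\in\mathcal{B}_Y$, and it suffices to show, for each fixed standard $V\in\mathcal{B}_Y$, that $\left(\ns{f_{\delta}}\left(x\right),\ns g\left(x\right)\right)\in\ns V$.

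First I would rewrite condition (2) as a first-order statement over the standard universe: for all $x\in X$, all $V\in\mathcal{B}_Y$, and all $\gamma,\delta\in\varDelta$ with $\gamma\leq\delta$, if $\left(f_{\gamma}\left(x\right),g\left(x\right)\right)\in V$ then $\left(f_{\delta}\left(x\right),g\left(x\right)\right)\in V$. Applying transfer, the same implication holds with $X$, $\varDelta$, $f$, $g$ and each entourage replaced by their nonstandard counterparts and with the quantifiers on $x$, $\gamma$, $\delta$ now ranging over $\ns X$ and $\ns\varDelta$ (with the internal order $\leq$).

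Now fix a standard $V\in\mathcal{B}_Y$ together with the given $x\in\ns X$ and $\gamma\leq\delta$ in $\ns\varDelta$. The hypothesis $\ns{f_{\gamma}}\left(x\right)\approx_Y\ns g\left(x\right)$ yields in particular $\left(\ns{f_{\gamma}}\left(x\right),\ns g\left(x\right)\right)\in\ns V$. Feeding this into the transferred implication gives $\left(\ns{f_{\delta}}\left(x\right),\ns g\left(x\right)\right)\in\ns V$. Since $V$ was an arbitrary standard member of the base, the base-characterisation of the preceding paragraph lets me conclude $\ns{f_{\delta}}\left(x\right)\approx_Y\ns g\left(x\right)$, as desired.

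The argument is essentially mechanical, and the only point requiring care is that transfer must be invoked for each \emph{standard} $V\in\mathcal{B}_Y$ separately, rather than quantifying over all $V\in\ns{\mathcal{B}_Y}$ simultaneously. This is exactly what the base-characterisation of $\approx_Y$ permits, and it explains why the monotonicity condition (2) is phrased relative to a fixed base instead of the whole uniformity.
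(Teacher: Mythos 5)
Your proof is correct and follows essentially the same route as the paper's: fix a standard $V\in\mathcal{B}_{Y}$, note that the hypothesis $\ns{f_{\gamma}}\left(x\right)\approx_{Y}\ns{g}\left(x\right)$ gives $\left(\ns{f_{\gamma}}\left(x\right),\ns{g}\left(x\right)\right)\in\ns{V}$, apply the transferred condition (2), and conclude since $V$ was arbitrary and $\mathcal{B}_{Y}$ is a base. Your write-up merely makes explicit two points the paper leaves tacit, namely the identity $\approx_{Y}=\bigcap_{V\in\mathcal{B}_{Y}}\ns{V}$ over standard base elements and the first-order formulation of (2) being transferred.
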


\begin{proof}
Suppose $\ns{f_{\gamma}}\left(x\right)\approx_{Y}\ns{g}\left(x\right)$
and $\gamma\leq\delta$. Let $V\in\mathcal{B}_{Y}$. Clearly $\ns{f_{\gamma}}\left(x\right)\mathrel{\ns{V}}\ns{g}\left(x\right)$
holds. By the transferred condition (2), it follows that $\ns{f_{\delta}}\left(x\right)\mathrel{\ns{V}}\ns{g}\left(x\right)$.
Hence $\ns{f_{\delta}}\left(x\right)\approx_{Y}\ns{g}\left(x\right)$,
because $V$ was arbitrary.
\end{proof}
\begin{thm}[Generalised Dini's theorem]
Let $X$ be a topological space, $Y$ a uniform space, $\varDelta$
a directed set, $\set{f_{\delta}\colon X\to Y}_{\delta\in\varDelta}$
a net of continuous maps, and $g\colon X\to Y$ a continuous map.
If $\set{f_{\delta}}_{\delta\in\varDelta}$ is monotonically convergent
to $g$, then the convergence is uniform on each compact subset of
$X$.
\end{thm}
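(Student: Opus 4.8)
The plan is to use the nonstandard characterisation of uniform convergence on a compact set together with \nameref{lem:Robinson} and \prettyref{lem:Monotonicity}. Recall that $\set{f_{\delta}}_{\delta\in\varDelta}$ converges uniformly to $g$ on a set $K$ if and only if $\ns{f_{\delta}}\left(y\right)\approx_{Y}\ns{g}\left(y\right)$ for every $y\in\ns{K}$ and every illimited $\delta\in\ns{\varDelta}$. Since $K$ is compact, every $y\in\ns{K}$ is nearstandard: there is a standard $x\in K$ with $y\in\mu_{X}\left(x\right)$. So I would fix an illimited $\delta$ and a point $y\in\ns{K}$, choose such an $x$, and aim to prove $\ns{f_{\delta}}\left(y\right)\approx_{Y}\ns{g}\left(y\right)$.

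The obstacle is that for illimited $\delta$ the internal map $\ns{f_{\delta}}$ is only internally continuous, not S-continuous, so $y\in\mu_{X}\left(x\right)$ does \emph{not} give $\ns{f_{\delta}}\left(y\right)\approx_{Y}\ns{f_{\delta}}\left(x\right)$ directly; and although pointwise convergence yields $\ns{f_{\gamma}}\left(x\right)\approx_{Y}\ns{g}\left(x\right)$, this holds only at the illimited indices $\gamma$, whereas S-continuity of $f_{\gamma}$ at $x$ holds only at the standard (limited) indices. These two ranges of indices do not overlap, and bridging them is the crux. The bridge is \nameref{lem:Robinson}: for every standard $\gamma\in\varDelta$ the map $f_{\gamma}$ is continuous at $x$, so $\ns{f_{\gamma}}\left(x\right)\approx_{Y}\ns{f_{\gamma}}\left(y\right)$; thus the internal nets $\set{\ns{f_{\gamma}}\left(x\right)}_{\gamma\in\ns{\varDelta}}$ and $\set{\ns{f_{\gamma}}\left(y\right)}_{\gamma\in\ns{\varDelta}}$ are infinitely close for arbitrarily large limited $\gamma$, hence, by part (2) of \nameref{lem:Robinson}, also for arbitrarily small illimited $\gamma$. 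I would then pick such an illimited $\gamma$ with $\gamma\leq\delta$.

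For that particular $\gamma$ the three relations combine: $\ns{f_{\gamma}}\left(y\right)\approx_{Y}\ns{f_{\gamma}}\left(x\right)$ by the choice of $\gamma$, next $\ns{f_{\gamma}}\left(x\right)\approx_{Y}\ns{g}\left(x\right)$ by pointwise convergence at the standard point $x$ (as $\gamma$ is illimited), and finally $\ns{g}\left(x\right)\approx_{Y}\ns{g}\left(y\right)$ by continuity of $g$ at $x$. Transitivity of $\approx_{Y}$ then gives $\ns{f_{\gamma}}\left(y\right)\approx_{Y}\ns{g}\left(y\right)$. Since $\gamma\leq\delta$, \prettyref{lem:Monotonicity} propagates this upward to $\ns{f_{\delta}}\left(y\right)\approx_{Y}\ns{g}\left(y\right)$. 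As $\delta$ and $y$ were arbitrary, the convergence is uniform on $K$. The step I expect to be the main obstacle is exactly the index mismatch described above; everything else is a routine chain of monads, and \prettyref{lem:Monotonicity} is precisely what lets the single good illimited index $\gamma$ control the arbitrary illimited index $\delta$.
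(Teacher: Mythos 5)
Your proposal is correct and follows essentially the same route as the paper's own proof: compactness gives a standard shadow, \nameref{lem:Robinson} (part 2) bridges the standard-index S-continuity to an arbitrarily small illimited index below the given $\delta$, the monad chain through pointwise convergence and continuity of $g$ yields $\ns{f_{\gamma}}\left(y\right)\approx_{Y}\ns{g}\left(y\right)$, and \prettyref{lem:Monotonicity} propagates this up to $\delta$. The only differences are notational, and your explicit identification of the limited/illimited index mismatch as the crux matches the paper's use of Robinson's lemma exactly.
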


\begin{proof}
Let $K$ be a compact subset of $X$. Let $x\in\ns{K}$. By the nonstandard
characterisation of compactness \cite[Corollary 4.1.15]{Rob66}, there
is a standard point $\prescript{\circ}{}{x}\in K$ such that $x\in\mu_{X}\left(\prescript{\circ}{}{x}\right)$.
For each $\delta\in\varDelta$, by the nonstandard characterisation
of continuity, $\ns{f_{\delta}}\left(x\right)\approx_{Y}f_{\delta}\left(\prescript{\circ}{}{x}\right)$
holds. Similarly $\ns{g}\left(x\right)\approx_{Y}g\left(\prescript{\circ}{}{x}\right)$
holds. Applying \nameref{lem:Robinson} to the former $\approx_{Y}$,
$\ns{f_{L}}\left(x\right)\approx_{Y}\ns{f_{L}}\left(\prescript{\circ}{}{x}\right)$
holds also for arbitrarily small illimited $L\in\ns{\varDelta}$.
Fix such $L$. By the nonstandard characterisation of (pointwise)
convergence, it follows that $\ns{f_{L}}\left(\prescript{\circ}{}{x}\right)\approx_{Y}g\left(\prescript{\circ}{}{x}\right)$.
Combining those $\approx_{Y}$, we have that $\ns{f_{L}}\left(x\right)\approx_{Y}\ns{g}\left(x\right)$.
By \prettyref{lem:Monotonicity}, $\ns{f_{\delta}}\left(x\right)\approx_{Y}\ns{g}\left(x\right)$
holds for any $\delta\geq L$. Hence $\ns{f_{\gamma}}\left(x\right)\approx_{Y}\ns{g}\left(x\right)$
holds also for all illimited $\delta\in\ns{\varDelta}$, because $L$
was arbitrarily small. By the nonstandard characterisation of uniform
convergence, $\set{f_{\delta}}_{\delta\in\varDelta}$ is uniformly
convergent to $g$ on $K$.
\end{proof}
\begin{rem}
One can weaken the notion of monotone convergence by replacing the
condition (2) with
\begin{enumerate}
\item[(2')] there exist a uniform base $\mathcal{B}_{Y}\subseteq\mathcal{U}_{Y}$
and a map $F\colon\mathcal{B}_{Y}\to\mathcal{U}_{Y}$ such that
\begin{enumerate}
\item $f_{\gamma}\left(x\right)\mathrel{V}g\left(x\right)$ implies $f_{\delta}\left(x\right)\mathrel{F\left(V\right)}g\left(x\right)$
for all $x\in X$, $V\in\mathcal{B}_{Y}$, $\gamma,\delta\in\varDelta$
with $\gamma\leq\delta$; and
\item the image $F\left(\mathcal{B}_{Y}\right)$ is a uniform base of $Y$.
\end{enumerate}
\end{enumerate}
Then \prettyref{lem:Monotonicity} remains true under this modification.
Thus the Dini's theorem holds also for this weakened monotone convergence.
Other generalised Dini's theorems can be found in Naimpally and Tikoo
\cite{NT90}, Kupka \cite{Kup97} and many other literature.
\end{rem}

\section{Discussion}

The use of Fehrele's principle can be avoided. For example, in the
proof of \prettyref{thm:Uniform-Convergence}, one can avoid the use
of \nameref{lem:Robinson} as follows. As in the first proof, $\ns{f_{\delta}}\left(x\right)\approx_{Y}\ns{f_{\delta}}\left(y\right)$
holds for all $\delta\in\varDelta$. Now let $U\in\mathcal{U}_{X}$.
Choose another $V\in\mathcal{U}_{X}$ such that $V^{3}\subseteq V$
and $V^{-1}=V$. Since $\set{f_{\delta}}_{\delta\in\varDelta}$ is
uniformly convergent to $g$, there exists a $\delta\in\varDelta$
such that $\ns{f_{\delta}}\left(x\right)\mathrel{\ns{V}}\ns{g}\left(x\right)$
and $\ns{f_{\delta}}\left(y\right)\mathrel{\ns{V}}\ns{g}\left(y\right)$
hold by transfer. (Here we used the $\varepsilon\delta$-style definition
of uniform convergence.) Fix such $\delta$. Clearly $\ns{f_{\delta}}\left(x\right)\mathrel{\ns{V}}\ns{f_{\delta}}\left(y\right)$
holds. Composing them, we have that $\ns{g}\left(x\right)\mathrel{\ns{U}}\ns{g}\left(y\right)$.
Hence $\ns{g}\left(x\right)\approx_{Y}\ns{g}\left(y\right)$. The
rest of the proof is the same as the first one. See also \cite[Theorem 4.6.2]{Rob66}
for the case where $X$ is metrisable and $\varDelta=\mathbb{N}$.
Although this alternative proof avoids the use of Fehrele's principle,
it is contaminated by an $\varepsilon\delta$-argument instead. Fehrele's
principle enables us to prove (complicated) theorems purely nonstandardly.

\bibliographystyle{amsplain}
\bibliography{bibtexsource}

\providecommand{\bysame}{\leavevmode\hbox to3em{\hrulefill}\thinspace}
\providecommand{\MR}{\relax\ifhmode\unskip\space\fi MR }
\providecommand{\MRhref}[2]{%
  \href{http://www.ams.org/mathscinet-getitem?mr=#1}{#2}
}
\providecommand{\href}[2]{#2}
\begin{thebibliography}{1}

\bibitem{Kup97}
Ivan Kupka, \emph{A generalised uniform convergence and {D}ini's theorem}, New
  Zealand Journal of Mathematics \textbf{26} (1997), 67--72.

\bibitem{NT90}
S.~A. Naimpally and Mohan~L. Tikoo, \emph{Nearness {C}onvergence}, Canadian
  Mathematical Bulletin \textbf{33} (1990), no.~3, 268--272.

\bibitem{Osg12}
W.~F. Osgood, \emph{Lehrbuch der {F}unktionentheorie}, 2nd ed., B. G. Teubner
  Verlag, 1912 (German).

\bibitem{Rob66}
Abraham Robinson, \emph{Non-standard {A}nalysis}, North-Holland, 1966.

\bibitem{SL76}
K.~D. Stroyan and W.~A.~J. Luxemburg, \emph{Introduction to {T}he {T}heory of
  {I}nfinitesimals}, Academic Press, 1976.

\bibitem{vdB87}
Imme van~den Berg, \emph{Nonstandard {A}symptotic {A}nalysis}, Lecture Notes in
  Mathematics, vol. 1249, Springer-Verlag, 1987.

\end{thebibliography}

\end{document}